\newtheorem{definition}{Definition}
\newtheorem{lemma}{Lemma}
\newtheorem{theorem}{Theorem}
\newtheorem{corollary}{Corollary}
\newtheorem{proposition}{Proposition}
\newtheorem*{mainResult*}{Main Result}
\newcommand{\sG}{\mathfrak{sG}} 
\newcommand{\FD}{\mathcal{FD}} 
\newcommand{\FDC}{\mathcal{FDC}} 
\newcommand{\R}{\mathbb{R}} 
\newcommand{\N}{\mathbb{N}} 
\newcommand{\Mk}{\mathbb{M}_k} 
\begin{document}

\title{Graph limits: An alternative approach to s-graphons}
\keywords{graph limits, s-convergence}

\author{Martin Dole\v{z}al}

\thanks{The research was supported by the GA\v{C}R project 18-01472Y and RVO: 67985840.}

\address{Institute of Mathematics of the Czech Academy of Sciences \\
\v{Z}itn\'{a} 25, 115 67 Praha~1 \\
Czech Republic}

\maketitle

\begin{abstract}
We show that s-convergence of graph sequences is equivalent to the convergence of certain compact sets, called shapes, of Borel probability measures.
This result is analogous to the characterization of graphon convergence (with respect to the cut distance) by the convergence of envelopes, due to Dole\v{z}al, Greb\'{i}k, Hladk\'{y}, Rocha, and Rozho\v{n}.
\end{abstract}

\section{Introduction}

The recently developed theory of graph limits is an important tool in graph theory. Usually, a different approach is applied to dense graph sequences than to sparse graph sequences. For dense graph sequences, the convergence of subgraph densities ~\cite{LoSz2006,BCLSV2008} is employed in most cases.
For very sparse graph sequences, one may employ for example the Benjamini-Schramm convergence~\cite{BeSc2001} or the local-global convergence~\cite{BoRi2011,HaLoSz2014}.
As was noted in~\cite{KuLoSz2019}, in the case of each of the three convergence notions mentioned so far, the limit objects can be equivalently represented by symmetric Borel measures on the unit square. This simple observation was elaborated in~\cite{KuLoSz2019} to build up a completely new theory of graph limits, based on so called s-convergence, which can be applied to arbitrary graph sequences and whose limit objects are always nontrivial.
(Recall that other approaches to unifying the dense and the sparse graph limit theories can be found in~\cite{BaSz2018,NeOs20}.)
The key property of this new theory is compactness, in the sense that every graph sequence has an s-convergent subsequence. As hinted above, the limit objects, called s-graphons, are symmetric Borel probability measures on the unit square. A very encouraging fact is that, for dense graph sequences, there is not a big difference between graphons and s-graphons as limit objects. The only information which we lose by considering s-graphons instead of graphons is the edge density of the limit object (see~\cite[Chapter~11]{KuLoSz2019}).


Let us briefly recall the notion of s-convergence from~\cite{KuLoSz2019}. For every finite graph $G$ (resp. every s-graphon $\mu$) and every $k \in \N$, we consider a certain compact subset of $k$-by-$k$ matrices which is called the $k$-shape of $G$ (resp. $\mu$).
Now, a graph sequence $\{ G_n \}_{n=1}^{\infty}$ is said to be s-convergent to an s-graphon $\mu$ if, for every $k \in \N$, the $k$-shapes of $G_n$ are convergent to the $k$-shape of $\mu$ in the Hausdorff distance. Among the main results of~\cite{KuLoSz2019}, it is shown that every graph sequence has a subsequence which is s-convergent to some s-graphon, and that every s-graphon is a limit of some s-convergent graph sequence.

The $k$-shape of a graph consists of the density matrices corresponding to all fractional partitions of the vertex set of the graph into $k$ sets of the same size.
The fact that these partitions are fractional suggests that it may be a good idea to leave the discrete world and to approach s-convergence via continuity tools. This is exactly what we do in this paper. Our goal is to provide an alternative description of s-convergence which clearly shows the topological properties of this notion. To achieve this goal, we first assign to each s-graphon $\mu$ a new shape as a certain compact subset $C(\mu)$ of s-graphons.
Then we prove the following:

\begin{mainResult*}[cf. Theorem~\ref{thm:main}, Section~\ref{sec:result}]
For any sequence $\{ \mu_n \}_{n=1}^{\infty}$ of s-graphons, convergence of the $k$-shapes $C(\mu_n,k)$ for every $k\in\N$ is equivalent to the convergence of the shapes $C(\mu_n)$.
\end{mainResult*}

The corresponding result for graph sequences follows as well.
Thus we obtain a simple characterization of s-convergence using the hyperspace of compact subsets of s-graphons.

In~\cite{KuLoSz2019}, the $k$-shape of an s-graphon is defined with the help of certain $k$-tuples of non-negative Borel (or, equivalently, continuous) functions on the unit interval.
The subtle discomfort of this definition is that one usually needs to prove something for every $k \in \N$. Note that this can be easily bypassed by using our new notion of shapes instead of $k$-shapes. We define the shape of an s-graphon with the help of certain non-negative continuous (or, equivalently, bounded Borel) functions on the unit square; we call these functions fairly distributed.
So the main difference is that, instead of considering $k$-tuples of functions,
we consider only single (fairly distributed) functions.

Let us informally describe some connections of our work to~\cite{KuLoSz2019}.
Recall that the $k$-tuples of functions used in the definition of $k$-shapes have two basic properties:
(i) their sum is the constant function $1$, and (ii) the integral of each of them equals $1/k$.
Intuitively, fairly distributed functions have very similar properties, when considered as collections of their horizontal sections (which are functions on the unit interval):
(i) the integral of each vertical section equals $1$, and (ii) the integral of each horizontal section equals $1$.
Further, note that finite graphs, as well as matrices with non-negative values, can be naturally interpreted as measures (see also~\cite[Lemma~5.1]{KuLoSz2019}).
Using this interpretation, $k$-shapes can be understood as subsets of s-graphons.
As we explain in the proof Theorem~\ref{thm:main} (by showing the equivalence (1) $\Leftrightarrow$ (1')), it is irrelevant whether we consider the convergence of $k$-shapes in the hyperspace of compact subsets of matrices or in the hyperspace of compact subsets of s-graphons.
These observations already hint towards our main result.

The correspondence between our new approach to s-convergence and the original approach taken in~\cite{KuLoSz2019} is of a similar nature as the correspondence between Chapter~7 from~\cite{DGHRR} and one particular characterization of convergence of subgraph densities from~\cite{BCLSV2012}. The latter correspondence is explained in detail in~\cite[Chapter~7]{DGHRR}.
Recall that convergence of graphons, with respect to the cut distance, is characterized by Cauchyness of certain sets, called fractional $q$-quotients, for every $q \in \N$ (see~\cite[Theorem~3.5, (i) $\Leftrightarrow$ (iii)]{BCLSV2012}).
This Cauchyness is meant in the corresponding Hausdorff distance.
In~\cite[Proposition~7.2]{DGHRR} it is shown that this is further equivalent to the convergence of certain compact sets called envelopes.
So, instead of Cauchyness of all fractional $q$-quotients, it is enough to consider only the convergence of a single sequence of envelopes.
While shapes (defined in the current paper) are subsets of measures, envelopes are subsets of bounded Borel functions (which corresponds better to the graphon world).
Nevertheless, the space of envelopes and the space of shapes are endowed with very similar topologies.
Indeed, let us consider the weak* topology on the Banach space $L^{\infty}([0,1]^2)$ of bounded Borel functions (which is the dual space to $L^{1}([0,1]^2)$).
Then the hyperspace of weak*-compact subsets of $L^{\infty}([0,1]^2)$ can be equipped with the corresponding Vietoris topology, and envelopes are elements of this hyperspace.
Similarly, we may consider the Banach space of continuous functions on the unit square (with the supremum norm).
Then its dual Banach space can be identified with the space of all Radon measures on the unit square.
Indeed, a Radon measure $\mu$ acts on the space of continuous functions by
\[
\mu (f) = \int_{ (x,y) \in [0,1]^2 } f(x,y) \, d\mu(x,y), \qquad f \colon [0,1]^2 \rightarrow \R \textnormal{ continuous}.
\]
Now, observe that the weak topology on Borel probability measures is exactly the restriction of the weak* topology from the dual Banach space of Radon measures.
Therefore the main idea of this paper is the same as the main idea in~\cite{DGHRR}:
In both cases (convergence of subgraph densities/s-convergence), a certain weak*-compact subset (called envelope/shape) of a suitable dual Banach space is assigned to each graphon/s-graphon.
Then it is shown that convergence of graphons/s-graphons is equivalent to the convergence of the corresponding weak*-compact sets in the Vietoris topology.

Let us shortly summarize the content of this paper.
In Section~\ref{sec:preliminaries} we present some preliminaries and basic notations, including our new definition of shapes.
In Section~\ref{sec:comparison} we show that the shape of a given s-graphon can be easily recovered from its $k$-shapes and vice versa.
In Section~\ref{sec:result} we prove our main result, Theorem~\ref{thm:main}.
Recall that one of the main tools used in~\cite{KuLoSz2019} is a variant of Szemer\'{e}di's regularity lemma~\cite{Szemeredi} (see~\cite[Lemma~6.1]{KuLoSz2019}).
Similarly, we prove yet another variant of the regularity lemma in Section~\ref{sec:regularity} (Lemma~\ref{lem:regularity}) which is crucial for our main result.
In Section~\ref{sec:graphs} we apply our main result to graph sequences.
Note that some results in~\cite{KuLoSz2019} are formulated only for sequences of finite graphs (or matrices) instead of sequences of measures.
This seems to be merely for convenience as, in many cases, working with finite graphs (or matrices) simplifies the notational difficulties a little bit due to their more combinatorial nature.
In our case, we have already renounced working with discrete objects (as we define shapes as compact subsets of Borel probability measures).
Therefore it seems to be easier to formulate and prove our results for sequences of s-graphons.
In Section~\ref{sec:BorCont} we prove that it makes no difference whether we require the fairly distributed functions from the definition of shapes to be continuous or only bounded Borel, as bounded Borel fairly distributed functions produce a dense subset of the shape. This result is an analogy of~\cite[Lemma~3.1]{KuLoSz2019}.

Unfortunately, the results of this paper do not give a self-contained proof of the compactness of s-convergence.
Moreover, we rely on this compactness (proved by means of $k$-shapes in~\cite{KuLoSz2019}) in the proof of the implication (2) $\Rightarrow$ (1) in Theorem~\ref{thm:main}.
If we had a self-contained (independent of the results from~\cite{KuLoSz2019}) proof of the compactness given by the convergence of shapes then no reference to the compactness given by the convergence of $k$-shapes would be needed.
As the hyperspace of compact subsets of s-graphons is compact, for such a self-contained proof, it is enough to prove the following:
Whenever a sequence of shapes (of some s-graphons) is convergent to some compact subset $E$ of s-graphons then there is an s-graphon $\mu$ such that $E = C(\mu)$.

\section{Preliminaries}
\label{sec:preliminaries}

Recall that if $K$ is a compact metrizable topological space then the hyperspace of compact subsets of $K$ is a compact topological space when equipped with the Vietoris topology, and it is metrizable by the Hausdorff distance. While the Vietoris topology is well defined even without specifying any concrete compatible metric on $K$, the Hausdorff distance depends on the choice of the metric on $K$. Thus, when talking about convergence of compact subsets of $K$ in the hyperspace, it is sometimes convenient to use only the topological convergence (rather than the metric one), namely when the metric on $K$ is not clearly specified. For example, we state Theorem~\ref{thm:main} in terms of Vietoris topology as there is no canonical compatible metric on the compact space $\sG$ (but we pick one such metric in the proof of Lemma~\ref{lem:regularity} where we adjust it to our needs). For more information on the Vietoris topology see e.g.~\cite[Chapter~4.F]{Kechris}.

We denote by $\lambda$ the 1-dimensional Lebesgue measure restricted to the unit interval $[0,1]$, and by $\lambda^2$ the 2-dimensional Lebesgue measure restricted to the unit square $[0,1]^2$.

For every $k \in \N$, we denote by $\Mk$ the space of all real $k$-by-$k$ matrices equipped with the topology of $\R^{k \times k}$. We also define
\[
\Mk^* = \left\{ M \in \Mk \colon M \text{ is symmetric with non-negative values and } \sum_{i,j=1}^k M(i,j) = 1 \right\}.
\]

We say that a non-negative bounded Borel function $f \colon [0,1]^2 \rightarrow [0,\infty)$ is \emph{fairly distributed} (with respect to $\lambda$) if for every $x,y \in [0,1]$ it holds
\[
\int_{v\in[0,1]} f(x,v) \, d\lambda(v) = \int_{u\in[0,1]} f(u,y) \, d\lambda(u) = 1.
\]
Let $\FD$ denote the set of all non-negative bounded Borel fairly distributed (with respect to $\lambda$) functions on $[0,1]^2$. Let $\FDC$ denote the set of all functions in $\FD$ which are moreover continuous.

Let $\sG$ be the space of all s-graphons, that is, the space of all symmetric Borel probability measures on $[0,1]^2$. This space is equipped with the weak topology inherited from the space of all Borel probability measures on $[0,1]^2$ (the definition of this topology can be found e.g. in~\cite[Section~17.E]{Kechris}).
Recall that the weak topology on Borel probability measures on $[0,1]^2$ is compact (see e.g.~\cite[Theorem~17.22]{Kechris}).

For every $f \in \FD$ and every $\mu \in \sG$ we define a function $\varphi(f,\mu) \in L^1([0,1]^{2},\lambda^2)$ by
\[
\varphi(f,\mu)(u,v) = \int_{(x,y)\in[0,1]^2} f(x,u) f(y,v) \, d\mu(x,y), \qquad u,v \in [0,1].
\]
The fact that $\varphi(f,\mu)$ is $\lambda^2$-integrable immediately follows from the boundedness of $f$. We further define a Borel measure $\Phi(f,\mu)$ on $[0,1]^2$ such that it is absolutely continuous with respect to $\lambda^2$ with the Radon-Nikodym derivative equal to $\varphi(f,\mu)$, that is,
\[
\Phi(f,\mu)(A) = \int_{(u,v)\in A} \varphi(f,\mu)(u,v) \, d\lambda^2(u,v), \qquad A \subseteq [0,1]^2 \textnormal{ Borel}.
\]
By the fair distribution of $f$ we have
\begin{align*}
\Phi(f,\mu)([0,1]^2) &= \int_{(u,v)\in[0,1]^2} \int_{(x,y)\in[0,1]^2} f(x,u) f(y,v) \, d\mu(x,y) \, d\lambda^2(u,v) \\
&= \int_{(x,y)\in[0,1]^2} \int_{(u,v)\in [0,1]^2} f(x,u) f(y,v) \, d\lambda^2(u,v) \, d\mu(x,y) \\
&= \int_{(x,y)\in[0,1]^2} \left( \int_{u\in[0,1]} f(x,u) \, d\lambda(u) \int_{v\in[0,1]} f(y,v) \, d\lambda(v) \right) d\mu(x,y) \\
&= \int_{(x,y)\in[0,1]^2} 1 \, d\mu(x,y) = 1,
\end{align*}
and so $\Phi(f,\mu)$ is an s-graphon (the symmetry of $\Phi(f,\mu)$ follows from the obvious symmetry of $\varphi(f,\mu)$).

\begin{definition}
\label{def:shape}
For every $\mu \in \sG$, we define its \emph{shape} $C(\mu) \subseteq \sG$ by
\[
C(\mu) = \overline{\{ \Phi(f,\mu) \colon f \in \FDC \}},
\]
where the closure is taken in the weak topology.
\end{definition}

Later (in Proposition~\ref{prop:BC}) we will see that $C(\mu) = \overline{\{ \Phi(f,\mu) \colon f \in \FD \}}$, so the continuity of the functions $f \in\FDC$ in the above definition is irrelevant. As the set $\sG$ is clearly closed in the space of all Borel probability measures on $[0,1]^2$, it does not matter in the above definition whether we take the closure only in the space $\sG$ or in the bigger space of all Borel probability measures. Finally, as the weak topology on Borel probability measures on $[0,1]^2$ is compact, the shape $C(\mu)$ is a compact subset of $\sG$.

Recall that in \cite{KuLoSz2019}, for every s-graphon $\mu$ and every natural number $k$, the \emph{$k$-shape} $C(\mu,k)$ is defined in the following way: The $k$-shape $C(\mu,k)$ is the smallest closed set in $\Mk$ containing all matrices of the form
\[
\left( \int_{(x,y)\in[0,1]^2} f_i(x) f_j(y) \, d\mu(x,y) \right)_{i,j=1}^k,
\]
where $f_1, f_2, \ldots, f_k$ are non-negative Borel functions on $[0,1]$ such that their sum is the constant function $1$ and such that $\int_{x\in[0,1]} f_i(x) \, d\lambda(x) = 1/k$ for every $i=1,2,\ldots,k$. (In fact, the original definition from \cite{KuLoSz2019} deals with the Cantor set $\mathcal{C}$ instead of the interval $[0,1]$, and with measures on $\mathcal{C}^2$ and functions $f_1, f_2, \ldots, f_k$ defined on $\mathcal{C}$. However, these two approaches are equivalent as it is explained in detail in \cite[Chapter~8]{KuLoSz2019}.) By the first part of Lemma~3.1 from~\cite{KuLoSz2019}, we may further require the functions $f_1, f_2, \ldots, f_k$ to be even continuous instead of only Borel, and the smallest closed subset of $\Mk$ containing all the corresponding matrices still equals $C(\mu,k)$. (Again, Lemma~3.1 in~\cite{KuLoSz2019} is formulated for functions $f_1, f_2, \ldots, f_k$ defined on $\mathcal{C}$ instead of $[0,1]$ but the same proof clearly works for $[0,1]$ as well.)

For our purposes, we need to interpret elements of $C(\mu,k)$ as s-graphons. We do it by interpreting each $M \in \Mk$ with non-negative values as the unique measure $\mu_M$ on $[0,1]^2$ satisfying, for every $i,j=1,2,\ldots,k$, that $\mu_M$ is uniformly (with respect to $\lambda^2$) distributed on $[\frac{i-1}{k}, \frac{i}{k}] \times [\frac{j-1}{k}, \frac{j}{k}]$ with $\mu_M( [\frac{i-1}{k}, \frac{i}{k}] \times [\frac{j-1}{k}, \frac{j}{k}] ) = M(i,j)$. In other words, $\mu_M$ is an absolutely continuous (with respect to $\lambda^2$) measure with the Radon-Nikodym derivative equal to $k^2 M(i,j)$ at every point from $\left( \frac{i-1}{k}, \frac{i}{k} \right) \times \left( \frac{j-1}{k}, \frac{j}{k} \right)$, $i,j = 1,2,\ldots,k$. It is easy to check that if $M$ belongs to $\Mk^*$ (in particular, if $M \in C(\mu,k)$) then $\mu_M$ is an s-graphon. Now for every s-graphon $\mu$ and every natural number $k$, we can define
\[
\widetilde{C}(\mu,k) = \{ \mu_M \colon M \in C(\mu,k) \}.
\]
Note that the mapping $M \mapsto \mu_M$ from $\Mk^*$ to $\sG$ is continuous. In particular, this implies that the set $\widetilde{C}(\mu,k)$ is compact by the compactness of $C(\mu,k)$.

\section{Comparison of $k$-shapes and shapes}
\label{sec:comparison}

In Lemma~\ref{lem:shapes-a} and Lemma~\ref{lem:shapes-b}, we uncover the connection of the $k$-shapes $C(\mu,k)$ introduced in~\cite{KuLoSz2019} with the shape $C(\mu)$ from Definition~\ref{def:shape}.

\begin{lemma}
\label{lem:shapes-a}
For every s-graphon $\mu$, we have
\[
C(\mu) = \overline{ \bigcup_{k\in\N} \widetilde{C}(\mu,k) },
\]
where the closure is taken in the weak topology.
\end{lemma}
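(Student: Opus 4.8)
The plan is to prove the two inclusions separately. For the inclusion $\overline{\bigcup_{k}\widetilde{C}(\mu,k)}\subseteq C(\mu)$, since $C(\mu)$ is closed, it suffices to show that each $\mu_M$ with $M\in C(\mu,k)$ lies in $C(\mu)$. The key observation is that a $k$-tuple $(f_1,\dots,f_k)$ of non-negative functions on $[0,1]$ with $\sum_i f_i\equiv 1$ and $\int f_i\,d\lambda=1/k$ should assemble into a single fairly distributed function on $[0,1]^2$. First I would define, for such a tuple, the function $f(x,v)=k\,f_{i}(x)$ whenever $v\in[\frac{i-1}{k},\frac{i}{k})$; then for fixed $x$ the vertical integral is $\sum_i k f_i(x)\cdot\frac1k=\sum_i f_i(x)=1$, and for fixed $v$ lying in the $i$-th slab the horizontal integral is $k\int f_i\,d\lambda=k\cdot\frac1k=1$, so $f\in\FD$. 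I would then compute $\varphi(f,\mu)(u,v)$ when $(u,v)\in(\frac{i-1}{k},\frac{i}{k})\times(\frac{j-1}{k},\frac{j}{k})$: it equals $k^2\int f_i(x)f_j(y)\,d\mu(x,y)=k^2 M(i,j)$, where $M$ is precisely the matrix generating $\mu_M$. Hence $\Phi(f,\mu)=\mu_M$, showing $\mu_M\in\{\Phi(f,\mu):f\in\FD\}$. Since Proposition~\ref{prop:BC} (assumed) gives $C(\mu)=\overline{\{\Phi(f,\mu):f\in\FD\}}$, we conclude $\mu_M\in C(\mu)$.

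For the reverse inclusion $C(\mu)\subseteq\overline{\bigcup_{k}\widetilde{C}(\mu,k)}$, it is enough to approximate each $\Phi(f,\mu)$ with $f\in\FDC$ by elements $\mu_M$ from the $k$-shapes, since the right-hand side is closed and $C(\mu)$ is the closure of such $\Phi(f,\mu)$. The strategy is to discretize a given continuous fairly distributed $f$. For a fixed large $k$ I would replace $f$ on each horizontal slab $[0,1]\times[\frac{i-1}{k},\frac{i}{k})$ by a tuple derived from averaging $f(x,\cdot)$ over that slab, namely setting $g_i(x)=k\int_{(i-1)/k}^{i/k} f(x,v)\,d\lambda(v)$. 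The fair distribution of $f$ forces $\sum_i g_i\equiv 1$ (since $\sum_i g_i(x)=\int_0^1 f(x,v)\,dv=1$ after telescoping the slabs) and $\int g_i\,d\lambda=1$ rather than $1/k$, so I must rescale and set $f_i=g_i/k$ to meet the $k$-shape normalization $\int f_i\,d\lambda=1/k$, giving a legitimate generating tuple and hence a matrix $M_k\in C(\mu,k)$.

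The main obstacle is the convergence argument: I must show that the s-graphons $\mu_{M_k}$ converge weakly to $\Phi(f,\mu)$ as $k\to\infty$. The natural approach is to test against a continuous function $g$ on $[0,1]^2$ and compare $\int g\,d\mu_{M_k}$ with $\int g\,d\Phi(f,\mu)=\int g(u,v)\varphi(f,\mu)(u,v)\,d\lambda^2(u,v)$. Here the uniform continuity of both $f$ (hence of the product kernel $(x,y,u,v)\mapsto f(x,u)f(y,v)$) and of $g$ on the compact square is essential: as $k\to\infty$, the piecewise-constant densities $k^2 M_k(i,j)$ are Riemann-type averages of $\varphi(f,\mu)$ over the cells $(\frac{i-1}{k},\frac{i}{k})\times(\frac{j-1}{k},\frac{j}{k})$, so $\int g\,d\mu_{M_k}$ approximates $\int g\,\varphi(f,\mu)\,d\lambda^2$ with an error controlled by the moduli of continuity of $f$ and $g$. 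The delicate point is matching the density $k^2 M_k(i,j)$ produced by the rescaled tuple $(f_i)$ to the cell-average of $\varphi(f,\mu)$; I expect this to require carefully tracking the factors of $k$ introduced by the discretization and the normalization, and verifying that the double-averaging (in both coordinates) of the continuous kernel against $\mu$ converges uniformly. Once this uniform approximation is established, weak convergence follows immediately, completing the inclusion and thus the proof.
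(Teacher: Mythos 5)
Your proof of the inclusion $\overline{\bigcup_{k}\widetilde{C}(\mu,k)}\subseteq C(\mu)$ contains a genuine logical gap: it invokes Proposition~\ref{prop:BC}, but in the paper that proposition is itself proved \emph{using} Lemma~\ref{lem:shapes-a}, so your argument is circular within the paper's logical structure. This is not a formality that can be waved away, because Proposition~\ref{prop:BC} is not easy to prove independently: as explained in Section~\ref{sec:BorCont}, the obvious route (approximate a Borel fairly distributed function by a continuous one in $L^1$) fails, since the approximation need not remain fairly distributed. The issue your citation hides is exactly the crux of this inclusion: your assembled function $f(x,v)=k f_i(x)$ for $v\in I_i$ is a step function in $v$, hence lies in $\FD$ but not in $\FDC$, whereas $C(\mu)$ is, by Definition~\ref{def:shape}, the closure of $\{\Phi(g,\mu)\colon g\in\FDC\}$ only. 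The paper's proof supplies the missing idea: smooth the assembled function across the slab boundaries $y=i/k$ by linear interpolation on bands of width $2\varepsilon$, obtaining $g_{\varepsilon}\in\FDC$ (the interpolation is a convex combination of $kf_i$ and $kf_{i+1}$, so fair distribution is preserved), estimate $\|\varphi(g,\mu)-\varphi(g_{\varepsilon},\mu)\|_{L^1(\lambda^2)}\le 4k^2(k-1)\varepsilon$, and let $\varepsilon\to 0$ to conclude $\mu_M=\Phi(g,\mu)\in C(\mu)$. This works precisely because of the special step structure of $g$; it is not a generic Borel-to-continuous approximation. A smaller point in the same direction: not every $M\in C(\mu,k)$ is generated by a $k$-tuple ($C(\mu,k)$ is the \emph{closure} of the tuple-generated matrices), so you also need to pass to all of $C(\mu,k)$ via the continuity of $M\mapsto\mu_M$ and the closedness of $C(\mu)$, as the paper does.

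Your second inclusion is in substance the paper's argument and is fine in outline: averaging $f\in\FDC$ over slabs gives a legitimate generating tuple, and uniform continuity yields $\left|k^2M_k(i,j)-\varphi(f,\mu)(u,v)\right|\le 2C\varepsilon$ on each cell $I_i\times I_j$, whence $L^1$-closeness of the Radon--Nikodym derivatives and weak convergence. Two remarks: your intermediate claim $\sum_i g_i\equiv 1$ is off by a factor of $k$ (with your definition $\sum_i g_i\equiv k$; your rescaling $f_i=g_i/k$ silently corrects this), and you left the key uniform estimate as an expectation rather than carrying it out --- it does go through, and in fact your one-variable averaging makes it slightly cleaner than the paper's two-variable averaging, but as written the inclusion is not yet proved.
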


\begin{proof}
First, we prove the inclusion
\begin{equation}
\label{eq:inclusion1}
\overline{ \bigcup_{k\in\N} \widetilde{C}(\mu,k) } \subseteq C(\mu).
\end{equation}
Suppose that $k$ is a natural number, and let $f_1, f_2, \ldots, f_k$ be non-negative continuous functions on $[0,1]$ such that their sum is the constant function $1$ and such that $\int_{x\in[0,1]} f_i(x) \, d\lambda(x) = 1/k$ for every $i=1,2,\ldots,k$. Let $M \in C(\mu,k)$ be the matrix given by
\[
M(i,j) = \int_{(x,y)\in[0,1]^2} f_i(x) f_j(y) \, d\mu(x,y), \qquad i,j=1,2,\ldots,k,
\]
and let $\mu_M$ be the corresponding s-graphon from $\widetilde{C}(\mu,k)$. We define a function $g \colon [0,1]^2 \rightarrow [0,k]$ by
\[
g(x,y) = k f_i(x), \qquad x \in [0,1], y \in I_i, \qquad i=1,2,\ldots,k,
\]
where
\begin{equation}
\label{not:k-net}
\begin{aligned}
& I_i = \left[\frac{i-1}{k}, \frac{i}{k}\right) \qquad \textnormal{for } i < k, \\
& I_k = \left[\frac{k-1}{k}, 1\right].
\end{aligned}
\end{equation}
Then $g$ belongs to $\FD$. Moreover, for every $i,j=1,2,\ldots,k$ and every $(u,v) \in I_i \times I_j$ we have
\begin{align*}
\varphi(g,\mu)(u,v) &= \int_{(x,y)\in[0,1]^2} g(x,u) g(y,v) \, d\mu(x,y) \\
&= k^2 \int_{(x,y)\in[0,1]^2} f_i(x) f_j(y) \, d\mu(x,y) \\
&= k^2 M(i,j),
\end{align*}
and it follows that $\Phi(g,\mu) = \mu_M$. However, the function $g$ is not necessarily continuous on $[0,1]^2$, only its restrictions to each of the sets $[0,1] \times I_i$, $i=1,2,\ldots,k$, are continuous. To deal with this issue, we fix $\varepsilon \in (0, \frac{1}{2k})$. Then we define a new function $g_{\varepsilon} \colon [0,1]^2 \rightarrow [0,k]$ by
\[
g_{\varepsilon}(x,y) =
\begin{cases}
g(x,y) & \text{if } \left| y - \frac{i}{k} \right| > \varepsilon \text{ for every } i=1,2,\ldots,k-1, \\
\frac{1}{2\varepsilon} (\frac{i}{k} + \varepsilon - y) k f_i(x) \\
\quad + \frac{1}{2\varepsilon} (y - \frac{i}{k} + \varepsilon) k f_{i+1}(x) & \text{if } \left| y - \frac{i}{k} \right| \le \varepsilon,\quad i=1,2,\ldots,k-1.
\end{cases}
\]
It is straightforward to check that $g_{\varepsilon} \in \FDC$. Moreover, the values of $g$ and $g_{\varepsilon}$ may differ only on the set $[0,1] \times \bigcup_{i=1}^{k-1} \left[ \frac{i}{k} - \varepsilon, \frac{i}{k} + \varepsilon \right]$. Thus, if we denote by $\mu^{(1)}$ the marginal of $\mu$ on the first coordinate (which is, by the symmetry of $\mu$, the same as the marginal $\mu^{(2)}$ of $\mu$ on the second coordinate) then we have
\begin{align*}
\| g - g_{\varepsilon} \|_{L^1([0,1]^2, \mu^{(1)} \times \lambda)} &\le k (\mu^{(1)} \times \lambda) \left( [0,1] \times \bigcup_{i=1}^{k-1} \left[ \frac{i}{k} - \varepsilon, \frac{i}{k} + \varepsilon \right] \right) \\
&= 2k(k-1)\varepsilon.
\end{align*}
Consequently, it holds
\begin{equation*}
\begin{split}
&\| \varphi(g,\mu) - \varphi(g_{\varepsilon},\mu) \|_{L^1([0,1]^2, \lambda^2)} \\
\le & \int_{(u,v)\in[0,1]^2} \int_{(x,y)\in[0,1]^2} | g(x,u)g(y,v) - g_{\varepsilon}(x,u)g_{\varepsilon}(y,v) | \, d\mu(x,y) \, d\lambda^2(u,v) \\
\le & \int_{(u,v)\in[0,1]^2} \int_{(x,y)\in[0,1]^2} g(x,u) | g(y,v) - g_{\varepsilon}(y,v) | \, d\mu(x,y) \, d\lambda^2(u,v) \\
& \qquad + \int_{(u,v)\in[0,1]^2} \int_{(x,y)\in[0,1]^2} g_{\varepsilon}(y,v) | g(x,u) - g_{\varepsilon}(x,u) | \, d\mu(x,y) \, d\lambda^2(u,v) \\
\le & k \int_{v\in[0,1]} \int_{(x,y)\in[0,1]^2} | g(y,v) - g_{\varepsilon}(y,v) | \, d\mu(x,y) \, d\lambda(v) \\
& \qquad + k \int_{u\in[0,1]} \int_{(x,y)\in[0,1]^2} | g(x,u) - g_{\varepsilon}(x,u) | \, d\mu(x,y) \, d\lambda(u) \\
= & k \| g - g_{\varepsilon} \|_{L^1([0,1]^2,\mu^{(2)}\times\lambda)} + k \| g - g_{\varepsilon} \|_{L^1([0,1]^2,\mu^{(1)}\times\lambda)} \\
\le & 4k^2(k-1)\varepsilon.
\end{split}
\end{equation*}
It follows that the measures $\Phi(g_{\varepsilon},\mu) \in C(\mu)$ weakly converge to $\Phi(g,\mu) = \mu_M$ as $\varepsilon$ tends to $0$. Thus $\mu_M$ belongs to the closed set $C(\mu)$. By the continuity of the mapping $M \mapsto \mu_M$ from $\Mk^*$ to $\sG$ it follows that, for every $k \in \N$, the set $C(\mu)$ contains a dense subset of $\widetilde{C}(\mu,k)$. As the shape $C(\mu)$ is closed, it follows that $\bigcup_{k\in\N} \widetilde{C}(\mu,k) \subseteq C(\mu)$, and inclusion~\eqref{eq:inclusion1} follows.

Now we show the other inclusion. It is enough to prove that $\Phi(f,\mu) \in \overline{ \bigcup_{k\in\N} \widetilde{C}(\mu,k) }$ for every $f \in \FDC$ as the measures $\Phi(f,\mu)$, $f \in \FDC$, are dense in $C(\mu)$. So let us fix a function $f \in \FDC$. We define $C = \sup_{(x,y)\in[0,1]^2}f(x,y)$, and we pick $\varepsilon > 0$. By the uniform continuity of $f$, there is $k \in \N$ such that $| f(x,y) - f(x',y') | < \varepsilon$ whenever the points $(x,y), (x',y')$ from $[0,1]^2$ are such that $|x-x'| \le \frac{1}{k}$ and $|y-y'| \le \frac{1}{k}$. Let $I_i$, $i=1,2,\ldots,k$, be the intervals given by~\eqref{not:k-net}. Let $f_1,f_2,\ldots,f_k \colon [0,1] \rightarrow [0, \frac{C}{k}]$ be functions given by
\[
f_i(x) = k \int_{(u,v) \in I_j \times I_i}f(u,v) \, d\lambda(u,v), \qquad x \in I_j, \qquad i,j = 1,2,\ldots,k.
\] 
It is easy to verify that $f_1, f_2, \ldots, f_k$ are non-negative Borel functions on $[0,1]$ such that their sum is the constant function $1$ and such that $\int_{x\in[0,1]} f_i(x) \, d\lambda(x) = 1/k$ for every $i=1,2,\ldots,k$. Thus the matrix $M$ given by
\[
M(i,j) = \int_{(x,y)\in[0,1]^2} f_i(x) f_j(y) \, d\mu(x,y), \qquad i,j=1,2,\ldots,k,
\]
belongs to the $k$-shape $C(\mu,k)$. For every $i,j = 1,2,\ldots,k$ and every $(u,v) \in I_i \times I_j$, it holds
\begin{equation*}
\begin{split}
& \left| k^2M(i,j) - \varphi(f,\mu)(u,v) \right| \\
= & \left| k^2 \int_{(x,y)\in[0,1]^2} f_i(x) f_j(y) \, d\mu(x,y) - \int_{(x,y)\in[0,1]^2} f(x,u)f(y,v) \, d\mu(x,y) \right| \\
\le & \sum_{s,t=1}^{k} \int_{(x,y) \in I_s \times I_t} \left| k^2 f_i(x) f_j(y) - f(x,u) f(y,v) \right| \, d\mu(x,y) \\
\le & \sum_{s,t=1}^{k} \int_{(x,y) \in I_s \times I_t} \left| k^2 f_i(x) f_j(y) - k f_i(x) f(y,v) \right| \, d\mu(x,y) \\
& \qquad + \sum_{s,t=1}^{k} \int_{(x,y) \in I_s \times I_t} \left| k f_i(x) f(y,v) - f(x,u) f(y,v) \right| \, d\mu(x,y) \\
\le & C \sum_{s,t=1}^{k} \int_{(x,y) \in I_s \times I_t} \left| k f_j(y) - f(y,v) \right| \, d\mu(x,y) \\
& \qquad + C \sum_{s,t=1}^{k} \int_{(x,y) \in I_s \times I_t} \left| k f_i(x) - f(x,u) \right| \, d\mu(x,y).
\end{split}
\end{equation*}
In the last two integrands, the expression $k f_j(y)$ is equal to the mean value of $f$ on $I_t \times I_j$, and so it is $\varepsilon$-close to $f(y,v)$ for every $(y,v) \in I_t \times I_j$; similarly $k f_i(x)$ is $\varepsilon$-close to $f(x,u)$ for every $(x,u) \in I_s \times I_i$.
We conclude that
\[
\left| k^2M(i,j) - \varphi(f,\mu)(u,v) \right| \le 2C \sum_{s,t=1}^k \varepsilon \mu(I_s \times I_t) = 2C\varepsilon.
\]
It follows that, by the choice of $\varepsilon > 0$, the Radon-Nikodym derivative (with respect to $\lambda^2$) of $\mu_M \in \bigcup_{k \in \N} \widetilde{C}(\mu,k)$ can be made arbitrarily close to $\varphi(f,\mu)$ in the space $L^1([0,1]^2,\lambda^2)$. Consequently, the measure $\Phi(f,\mu)$ is in the closure of the set $\bigcup_{k \in \N} \widetilde{C}(\mu,k)$. This completes the proof.
\end{proof}

\begin{lemma}
\label{lem:shapes-b}
For every s-graphon $\mu$ and every $k \in \N$ it holds
\[
\widetilde{C}(\mu,k) = C(\mu) \cap \{ \mu_M \colon M \in \Mk^* \}.
\]
\end{lemma}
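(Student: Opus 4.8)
The plan is to prove the two inclusions separately, with the forward inclusion being essentially free and the reverse one carrying the content. For $\widetilde{C}(\mu,k) \subseteq C(\mu) \cap \{ \mu_M \colon M \in \Mk^* \}$, I would simply note that Lemma~\ref{lem:shapes-a} already gives $\widetilde{C}(\mu,k) \subseteq C(\mu)$, while the matrices generating the $k$-shape are symmetric (by the symmetry of $\mu$), have non-negative values, and sum to $1$ (since $\sum_i f_i \equiv 1$ forces $\sum_{i,j}\int f_i(x)f_j(y)\,d\mu = \int 1\,d\mu = 1$); these three properties pass to the closure, so $C(\mu,k) \subseteq \Mk^*$ and hence $\widetilde{C}(\mu,k) \subseteq \{ \mu_M \colon M \in \Mk^* \}$.

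For the reverse inclusion, fix $M \in \Mk^*$ with $\mu_M \in C(\mu)$; the goal is to show $M \in C(\mu,k)$, i.e. $\mu_M \in \widetilde{C}(\mu,k)$. Since the weak topology on Borel probability measures on $[0,1]^2$ is (compact) metrizable, the membership $\mu_M \in C(\mu) = \overline{\{ \Phi(f,\mu) \colon f \in \FDC \}}$ yields a \emph{sequence} $f_n \in \FDC$ with $\Phi(f_n,\mu) \to \mu_M$ weakly. For each $n$ and each $i=1,\dots,k$ I would set $g_i^{(n)}(x) = \int_{u\in I_i} f_n(x,u)\,d\lambda(u)$, using the intervals $I_i$ from~\eqref{not:k-net}. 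These functions are non-negative and, by the continuity of $f_n$, continuous; the fair distribution of $f_n$ yields $\sum_i g_i^{(n)} \equiv 1$ and $\int g_i^{(n)}\,d\lambda = \lambda(I_i) = 1/k$. Thus $g_1^{(n)},\dots,g_k^{(n)}$ is precisely a fractional partition of the type used to define the $k$-shape, and a Tonelli/Fubini computation gives
\[
\int_{(x,y)\in[0,1]^2} g_i^{(n)}(x)\, g_j^{(n)}(y)\, d\mu(x,y) = \Phi(f_n,\mu)(I_i \times I_j),
\]
so the matrix $N_n = \big(\Phi(f_n,\mu)(I_i \times I_j)\big)_{i,j=1}^k$ belongs to $C(\mu,k)$.

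To close, I would pass to the limit along $n$. Because $\mu_M$ is absolutely continuous with respect to $\lambda^2$, the topological boundary of each box $I_i \times I_j$ (a finite union of segments) is $\mu_M$-null, so the portmanteau theorem upgrades the weak convergence to $N_n(i,j) = \Phi(f_n,\mu)(I_i \times I_j) \to \mu_M(I_i \times I_j) = M(i,j)$ for all $i,j$. Hence $N_n \to M$ in $\Mk$, and since $C(\mu,k)$ is closed, $M \in C(\mu,k)$; that is, $\mu_M \in \widetilde{C}(\mu,k)$, which is the inclusion $C(\mu) \cap \{ \mu_M \colon M \in \Mk^* \} \subseteq \widetilde{C}(\mu,k)$.

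I expect the main obstacle to be extracting, from the single weak limit $\Phi(f_n,\mu)\to\mu_M$, matrices that genuinely lie in $C(\mu,k)$. The resolution rests on the two nontrivial observations above: that the horizontal integrals $g_i^{(n)}$ are \emph{automatically} a fractional partition, which is exactly where the fair distribution of $f_n$ is used, and that the masses $\Phi(f_n,\mu)(I_i\times I_j)$ converge to the entries $M(i,j)$, which crucially exploits the absolute continuity of $\mu_M$ so that box boundaries are negligible. Everything else reduces to routine manipulation.
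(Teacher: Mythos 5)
Your proof is correct and follows essentially the same route as the paper: both use the portmanteau characterization of weak convergence (with the box boundaries being $\mu_M$-null) to approximate the entries $M(i,j)$ by the masses $\Phi(f,\mu)(I_i \times I_j)$, both build the fractional partition $f_i(x) = \int_{u \in I_i} f(x,u)\,d\lambda(u)$ from a fairly distributed function, and both conclude via the same Fubini identity and the closedness of $C(\mu,k)$. The only cosmetic differences are that you phrase the approximation with a sequence $f_n$ (via metrizability) where the paper uses an $\varepsilon$-formulation, and that you spell out the verification $C(\mu,k) \subseteq \Mk^*$, which the paper treats as known from the preliminaries.
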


\begin{proof}
The inclusion $\subseteq$ is an immediate consequence of Lemma~\ref{lem:shapes-a}.

To prove the opposite inclusion, we fix $M \in \Mk^*$ such that $\mu_M \in C(\mu)$.
We need to show that $M \in C(\mu,k)$. Let $I_i$, $i=1,2,\ldots,k$, be the intervals introduced in the proof of Lemma~\ref{lem:shapes-a} by~\eqref{not:k-net}. Recall that one of the many known characterizations of the weak convergence states that a sequence $\{ \nu_n \}_{n=1}^{\infty}$ of Borel probability measures on $[0,1]^2$ is weakly convergent to $\mu_M$ if and only if $\lim_{n\rightarrow\infty} \nu_n(A) = \mu_M(A)$ for every Borel set $A \subseteq [0,1]^2$ whose boundary (i.e. the set of all points in the closure of $A$ which are not interior points of $A$) is of $\mu_M$-measure zero (see e.g.~\cite[Theorem~17.20]{Kechris}). As $\mu_M \in C(\mu) = \overline{\{ \Phi(f,\mu) \colon f \in \FDC \}}$, this characterization implies that for any $\varepsilon > 0$ there is $f \in \FDC$ such that
\begin{equation}
\label{eq:close1}
\left| \Phi(f,\mu)( I_i \times I_j ) - \mu_M( I_i \times I_j ) \right| < \varepsilon, \qquad i,j=1,2,\ldots,k.
\end{equation}
Now we define functions $f_1,f_2,\ldots,f_k \colon [0,1] \rightarrow \R$ by
\[
f_i(x) = \int_{u \in I_i} f(x,u) \, d\lambda(u), \qquad x \in [0,1], \qquad i=1,2,\ldots,k.
\]
It is easy to verify that $f_1,f_2,\ldots,f_k$ are non-negative Borel (even continuous) functions such that their sum is the constant function $1$ and such that $\int_{x\in[0,1]} f_i(x) \, d\lambda(x) = 1/k$ for every $i=1,2,\ldots,k$. The corresponding matrix belonging to the $k$-shape $C(\mu,k)$ has the following entry on the position $(i,j)$:
\begin{equation}
\label{eq:close2}
\begin{split}
& \int_{(x,y)\in[0,1]^2} f_i(x) f_j(y) \, d\mu(x,y) \\
=& \int_{(x,y) \in [0,1]^2} \left( \int_{u \in I_i} f(x,u) \, d\lambda(u) \int_{v \in I_j} f(y,v) \, d\lambda(v) \right) \, d\mu(x,y) \\
=& \int_{(u,v) \in I_i \times I_j}
\int_{(x,y)\in [0,1]^2} f(x,u) f(y,v) \, d\mu(x,y) \, d\lambda^2(u,v) \\
=& \int_{(u,v) \in I_i \times I_j}
\varphi(f,\mu)(u,v) \, d\lambda^2(u,v) = \Phi(f,\mu) (I_i \times I_j).
\end{split}
\end{equation}
Combining~\eqref{eq:close1} and~\eqref{eq:close2} together shows that the matrix $M$ can be approximated in $\Mk$ with an arbitrary precision by a matrix belonging to $C(\mu,k)$. As the $k$-shape $C(\mu,k)$ is a closed subset of $\Mk$, it follows that $M \in C(\mu,k)$ as we wanted.
\end{proof}

Following the notation from~\cite[Chapter~11]{KuLoSz2019}, let $\mathcal{X}_s$ be the set of isomorphism classes of s-graphons, where two s-graphons $\nu_1, \nu_2$ are isomorphic if $C(\nu_1,k) = C(\nu_2,k)$ for every $k \in \N$. By Lemma~\ref{lem:shapes-a} and Lemma~\ref{lem:shapes-b} we have the following immediate corollary.

\begin{corollary}
\label{cor:shapes}
Two s-graphons $\nu_1, \nu_2$ are isomorphic if and only if $C(\nu_1) = C(\nu_2)$.
\end{corollary}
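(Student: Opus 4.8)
The plan is to obtain the corollary directly from Lemma~\ref{lem:shapes-a} and Lemma~\ref{lem:shapes-b}, together with the elementary observation that, for each fixed $k$, the assignment $M \mapsto \mu_M$ is a bijection between $\Mk^*$ and $\{ \mu_M \colon M \in \Mk^* \}$. Indeed, this map is injective, since from $\mu_M$ one recovers the entries of $M$ by the formula $M(i,j) = \mu_M(I_i \times I_j)$ for $i,j=1,2,\ldots,k$, where the $I_i$ are the intervals from~\eqref{not:k-net} (this uses only that $\mu_M$ has density $k^2 M(i,j)$ on the $(i,j)$-th block, whose $\lambda^2$-measure is $1/k^2$). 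Consequently, for a fixed $k$, the passage $C(\mu,k) \leftrightarrow \widetilde{C}(\mu,k)$ loses no information, and $\widetilde{C}(\mu,k)$ depends on $\mu$ only through the $k$-shape $C(\mu,k)$.

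For the forward implication, I would assume that $\nu_1$ and $\nu_2$ are isomorphic, that is, $C(\nu_1,k) = C(\nu_2,k)$ for every $k \in \N$. Applying the bijection above in each fixed $k$ gives $\widetilde{C}(\nu_1,k) = \widetilde{C}(\nu_2,k)$ for every $k$. Taking the closure of the union over all $k$ and invoking Lemma~\ref{lem:shapes-a} then yields $C(\nu_1) = C(\nu_2)$.

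For the backward implication, I would assume $C(\nu_1) = C(\nu_2)$ and fix an arbitrary $k \in \N$. By Lemma~\ref{lem:shapes-b},
\[
\widetilde{C}(\nu_1,k) = C(\nu_1) \cap \{ \mu_M \colon M \in \Mk^* \} = C(\nu_2) \cap \{ \mu_M \colon M \in \Mk^* \} = \widetilde{C}(\nu_2,k).
\]
Applying the injectivity of $M \mapsto \mu_M$ on $\Mk^*$ once more, I conclude $C(\nu_1,k) = C(\nu_2,k)$. Since $k$ was arbitrary, $\nu_1$ and $\nu_2$ are isomorphic.

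I do not expect any real obstacle here: the two lemmas already carry all the content, and the only point requiring (trivial) care is the injectivity of $M \mapsto \mu_M$ on $\Mk^*$, which is exactly what upgrades the inclusions supplied by the lemmas into the exact equality needed to match $k$-shapes with shapes in both directions.
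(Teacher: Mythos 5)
Your proof is correct and follows essentially the same route as the paper: the forward direction from Lemma~\ref{lem:shapes-a} and the backward direction from Lemma~\ref{lem:shapes-b}, with the injectivity of $M \mapsto \mu_M$ on $\Mk^*$ (which the paper leaves implicit here and only states explicitly later, in the proof of Theorem~\ref{thm:main}) supplying the translation between $C(\cdot,k)$ and $\widetilde{C}(\cdot,k)$. No gaps; you have simply written out the details the paper's two-sentence proof suppresses.
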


\begin{proof}
If $\nu_1$ and $\nu_2$ are isomorphic then their shapes are the same by Lemma~\ref{lem:shapes-a}. If, on the other hand, $\nu_1$ and $\nu_2$ have the same shapes then they are isomorphic by Lemma~\ref{lem:shapes-b}.
\end{proof}

\section{Regularity lemma}
\label{sec:regularity}

The following lemma is one of the key steps to prove Theorem~\ref{thm:main}. Note that it is a certain quantitative extension of the inclusion $C(\mu) \subseteq \overline{ \bigcup_{k \in \N} \widetilde{C} (\mu,k) }$ from Lemma~\ref{lem:shapes-a}.

\begin{lemma}
\label{lem:regularity}
Let $\rho$ be an arbitrary metric on $\sG$ compatible with the weak topology. Then for every $\varepsilon > 0$ there is $K \in \N$ such that for every $\nu \in \sG$ it holds
\begin{equation}
\label{eq:net}
d_{H}^{\rho} \left( C(\nu), \widetilde{C}(\nu,K) \right) \le \varepsilon,
\end{equation}
where $d_{H}^{\rho}$ is the Hausdorff distance on the hyperspace of compact subsets of $\sG$ which is obtained from the metric $\rho$.
\end{lemma}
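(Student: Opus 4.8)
The plan is to collapse the two–sided Hausdorff distance into a single, uniformly controllable quantity and then to realize the relevant approximants explicitly as block–averages that already sit inside the $K$-shapes. By Lemma~\ref{lem:shapes-a} we have $\widetilde{C}(\nu,K)\subseteq C(\nu)$ for every $\nu$ and every $K$, so one of the two one–sided distances entering $d_H^{\rho}(C(\nu),\widetilde{C}(\nu,K))$ vanishes, and it suffices to bound
\[
D_K(\nu):=\sup_{\sigma\in C(\nu)}\rho\big(\sigma,\widetilde{C}(\nu,K)\big).
\]
Since $\sigma\mapsto\rho(\sigma,\widetilde{C}(\nu,K))$ is $1$-Lipschitz, hence continuous, and since the measures $\Phi(f,\nu)$ with $f\in\FDC$ are dense in $C(\nu)$ by Definition~\ref{def:shape}, the supremum over $C(\nu)$ equals the supremum over this dense set. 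Thus I only need to approximate each $\Phi(f,\nu)$, $f\in\FDC$, by a member of $\widetilde{C}(\nu,K)$.

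The key observation is that the approximant may be taken to be the block–average of $\Phi(f,\nu)$ over the $K\times K$ grid $\{I_i\times I_j\}$ from~\eqref{not:k-net}. Indeed, setting $f_i(x)=\int_{u\in I_i}f(x,u)\,d\lambda(u)$ yields, exactly as computed in the proof of Lemma~\ref{lem:shapes-b}, a fractional partition whose density matrix $M\in C(\nu,K)$ satisfies $M(i,j)=\Phi(f,\nu)(I_i\times I_j)$; here the fair distribution of $f$ is precisely what guarantees that the $f_i$ are non–negative, sum to $1$, and each integrate to $1/K$. The associated $\mu_M\in\widetilde{C}(\nu,K)$ is then the measure obtained from $\Phi(f,\nu)$ by redistributing, uniformly with respect to $\lambda^2$, the $\Phi(f,\nu)$-mass of each grid block. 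Writing $B_K$ for this block–averaging operation, we have $B_K\Phi(f,\nu)=\mu_M\in\widetilde{C}(\nu,K)$, and therefore $D_K(\nu)\le\sup_{\sigma\in\sG}\rho(\sigma,B_K\sigma)$, a quantity that no longer refers to $\nu$, to $f$, or to the shapes at all.

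It thus remains to prove the purely analytic statement $\sup_{\sigma\in\sG}\rho(\sigma,B_K\sigma)\to0$ as $K\to\infty$, for the given (arbitrary) compatible metric $\rho$. The quantitative input is that for every continuous $h\colon[0,1]^2\to\R$ one has $|\sigma(h)-B_K\sigma(h)|\le\omega_h(\sqrt{2}/K)$, where $\omega_h$ is the modulus of continuity of $h$: on each block the difference is $\big|\int(h-\overline{h})\,d\sigma\big|$ with $\overline{h}$ the block–average of $h$, which is at most $\omega_h(\sqrt{2}/K)$ times the $\sigma$-mass of the block, and summing over all blocks gives the bound. To turn this into uniformity for $\rho$ I argue by compactness: if the claim failed there would be $\varepsilon>0$, indices $K_n\to\infty$, and measures $\sigma_n$ with $\rho(\sigma_n,B_{K_n}\sigma_n)\ge\varepsilon$; passing to a weakly convergent subsequence $\sigma_n\to\sigma_*$ (using compactness of $\sG$) and testing against continuous functions, the estimate above forces $B_{K_n}\sigma_n\to\sigma_*$ weakly as well, so $\rho(\sigma_n,B_{K_n}\sigma_n)\to0$, a contradiction. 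This yields a single $K$, uniform over all $\nu\in\sG$, with $D_K(\nu)\le\varepsilon$, which is~\eqref{eq:net}.

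The step I expect to require the most care is the reduction to $B_K$: one must verify that the block–average of $\Phi(f,\nu)$ genuinely belongs to $\widetilde{C}(\nu,K)$ (this is exactly where the computation of Lemma~\ref{lem:shapes-b} and the fair distribution of $f$ are used) and that the passage to the dense family $\{\Phi(f,\nu):f\in\FDC\}$ is legitimate. The concluding compactness argument is what keeps the bound independent of the choice of $\rho$, matching the freedom to adjust the metric; alternatively one may fix the explicit metric $\rho_0(\mu,\mu')=\sum_m 2^{-m}|\mu(h_m)-\mu'(h_m)|$ for a sup–norm–dense sequence $(h_m)$ of continuous functions and read the uniform bound off directly from the estimate on $|\sigma(h_m)-B_K\sigma(h_m)|$.
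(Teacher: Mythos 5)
Your proof is correct, and it shares the paper's two central ingredients---the one-sided reduction $d_H^{\rho}(C(\nu),\widetilde{C}(\nu,K))=\sup_{\sigma\in C(\nu)}\rho(\sigma,\widetilde{C}(\nu,K))$ via the inclusion from Lemma~\ref{lem:shapes-a}, and block-averaging over the $K\times K$ grid as the approximant---but it is organized genuinely differently in two respects. First, the paper approximates elements of the dense family $\bigcup_k\widetilde{C}(\nu,k)$, which forces it to coarsen a fractional $k$-partition $f_1,\dots,f_k$ into a $K$-partition $g_1,\dots,g_K$ (a construction introduced for this purpose) and then to check block by block that the resulting $\mu_N$ is the block-average of $\mu_M$; you instead approximate the dense family $\{\Phi(f,\nu)\colon f\in\FDC\}$ from Definition~\ref{def:shape} and certify that the block-average of $\Phi(f,\nu)$ lies in $\widetilde{C}(\nu,K)$ by recycling the computation~\eqref{eq:close2} from Lemma~\ref{lem:shapes-b}, so no new construction is needed. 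Second, and more substantively, the treatment of the arbitrary metric $\rho$ differs: the paper invokes uniform equivalence of all compatible metrics on the compact hyperspace to replace $\rho$ by the explicit metric~\eqref{eq:metric}, and then chooses $K$ quantitatively from the uniform continuity of the finitely many test functions $h_1,\dots,h_J$; you keep $\rho$ arbitrary, isolate the $\nu$-free quantity $\sup_{\sigma\in\sG}\rho(\sigma,B_K\sigma)$ (where $B_K$ denotes your block-averaging operator), and show it tends to $0$ by a sequential-compactness contradiction in $\sG$ itself, based on the pointwise estimate $|\sigma(h)-B_K\sigma(h)|\le\omega_h(\sqrt{2}/K)$. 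Your formulation makes the uniformity in $\nu$ completely transparent---the bound visibly never mentions $\nu$---at the price of a non-quantitative choice of $K$; the paper's route yields an explicit $K$ (given the sequence $h_j$) but buries the uniformity in the hyperspace-metric equivalence. Your closing alternative, reading the bound off the explicit metric $\rho_0(\mu,\mu')=\sum_m 2^{-m}|\mu(h_m)-\mu'(h_m)|$, is exactly the paper's choice~\eqref{eq:metric}, so the two arguments coincide there.
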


\begin{proof}
As the hyperspace of compact subsets of $\sG$ is compact, any two compatible metrics $d_1, d_2$ on the hyperspace are uniformly equivalent in the sense that for every $\varepsilon > 0$ there is $\delta > 0$ such that
\begin{multline*}
d_1 (E,F) \le \delta \Rightarrow d_2 (E,F) \le \varepsilon \\
\textnormal{and } d_2 (E,F) \le \delta \Rightarrow d_1 (E,F) \le \varepsilon, \qquad E,F \subseteq \sG \textnormal{ compact}.
\end{multline*}
In particular, every two Hausdorff distances on the hyperspace are uniformly equivalent. Therefore it is enough to prove the statement only for one fixed metric $\rho$ on $\sG$ (and for the corresponding Hausdorff distance $d_{H}^{\rho}$ on the hyperspace). It is easy to check that one possible choice of the compatible metric on $\sG$ is the metric $\rho$ given by
\begin{equation}
\label{eq:metric}
\rho(\nu_1,\nu_2) = \sum_{j=1}^{\infty} \frac{1}{2^j} \left| \int_{ (x,y) \in [0,1]^2 } h_j(x,y) \, d\nu_1(x,y) - \int_{ (x,y) \in [0,1]^2 } h_j(x,y) \, d\nu_2(x,y) \right|,
\end{equation}
where $\{ h_j \colon j \in \N \}$ is a fixed countable dense subset of continuous functions on $[0,1]^2$ with values in $[0,1]$. This is our choice of the compatible metric on $\sG$.

We fix $\varepsilon > 0$. We find $J \in \N$ such that
\[
\sum_{j=J+1}^{\infty} \frac{1}{2^j} < \frac{\varepsilon}{2}.\]
Then we find $K \in N$ such that $\left| h_j(x,y) - h_j(x',y') \right| < \frac{\varepsilon}{2}$, $j=1,2,\ldots,J$, whenever the points $(x,y), (x',y')$ from $[0,1]^2$ are such that $|x-x'| \le \frac{1}{K}$ and $|y-y'| \le \frac{1}{K}$. Now let $\nu$ be an arbitrary s-graphon, we need to check~\eqref{eq:net}. By Lemma~\ref{lem:shapes-a} we have
\[
\widetilde{C}(\nu,K) \subseteq \overline{ \bigcup_{k\in\N} \widetilde{C}(\nu,k) } = C(\nu),
\]
and so we only need to show that for every $k \in \N$ and for every $M \in C(\nu,k)$ there is $N \in C(\nu,K)$ such that $\rho(\mu_M,\mu_N) \le \varepsilon$. So let $f_1, f_2, \ldots, f_k$ be non-negative Borel functions on $[0,1]$ such that their sum is the constant function $1$ and such that $\int_{x\in[0,1]} f_i(x) \, d\lambda(x) = 1/k$ for every $i=1,2,\ldots,k$, and let $M \in C(\nu,k)$ be the matrix given by
\[
M(i,j) = \int_{(x,y)\in[0,1]^2} f_i(x) f_j(y) \, d\nu(x,y), \qquad i,j=1,2,\ldots,k.
\]
For every $r=1,2,\ldots,K$ we define a non-negative Borel function $g_r$ on $[0,1]$ by
\[
g_r = k \sum_{i=1}^k \lambda \left( \left( \frac{i-1}{k}, \frac{i}{k} \right) \cap \left( \frac{r-1}{K}, \frac{r}{K} \right) \right) f_i.
\]
Then we have
\[
\sum_{r=1}^K g_r = \sum_{i=1}^k k \sum_{r=1}^K \lambda \left( \left( \frac{i-1}{k}, \frac{i}{k} \right) \cap \left( \frac{r-1}{K}, \frac{r}{K} \right) \right) f_i = \sum_{i=1}^k f_i \equiv 1,
\]
and (for each $r=1,2,\ldots,K$) also
\begin{multline*}
\int_{x \in [0,1]} g_r(x) \, d\lambda(x) = \sum_{i=1}^k \lambda \left( \left( \frac{i-1}{k}, \frac{i}{k} \right) \cap \left( \frac{r-1}{K}, \frac{r}{K} \right) \right) k \int_{x \in [0,1]} f_i(x) \, d\lambda(x) \\
= \sum_{i=1}^k \lambda \left( \left( \frac{i-1}{k}, \frac{i}{k} \right) \cap \left( \frac{r-1}{K}, \frac{r}{K} \right) \right) = \frac{1}{K}.
\end{multline*}
It follows that the matrix $N$ given by
\[
N(r,s) = \int_{(x,y)\in[0,1]^2} g_r(x) g_s(y) \, d\nu(x,y), \qquad r,s=1,2,\ldots,K,
\]
belongs to the $K$-shape $C(\nu,K)$. Moreover, for every $r,s=1,2,\ldots,K$ it holds
\begin{equation}
\label{eq:measures-comparison}
\begin{split}
& \mu_N \left( \left( \frac{r-1}{K}, \frac{r}{K} \right) \times \left( \frac{s-1}{K}, \frac{s}{K} \right) \right) = \int_{(x,y)\in[0,1]^2} g_r(x) g_s(y) \, d\nu(x,y) \\
=& \sum_{i,j=1}^k \lambda \left( \left( \frac{i-1}{k}, \frac{i}{k} \right) \cap \left( \frac{r-1}{K}, \frac{r}{K} \right) \right) \lambda \left( \left( \frac{j-1}{k}, \frac{j}{k} \right) \cap \left( \frac{s-1}{K}, \frac{s}{K} \right) \right) k^2M(i,j) \\
=& \sum_{i,j=1}^k \mu_M \left( \left( \left( \frac{i-1}{k}, \frac{i}{k} \right) \cap \left( \frac{r-1}{K}, \frac{r}{K} \right) \right) \times \left( \left( \frac{j-1}{k}, \frac{j}{k} \right) \cap \left( \frac{s-1}{K}, \frac{s}{K} \right) \right) \right) \\
=& \mu_M \left( \left( \frac{r-1}{K}, \frac{r}{K} \right) \times \left( \frac{s-1}{K}, \frac{s}{K} \right) \right).
\end{split}
\end{equation}
By~\eqref{eq:measures-comparison} and by the choice of $K$, we easily conclude that
\[
\left| \int_{ (x,y) \in [0,1]^2 } h_j(x,y) \, d\mu_M(x,y) - \int_{ (x,y) \in [0,1]^2 } h_j(x,y) \, d\mu_N(x,y) \right| < \frac{\varepsilon}{2}
\]
for every $j=1,2,\ldots,J$. Consequently, it holds that
\begin{multline*}
\rho(\mu_M,\mu_N) = \sum_{j=1}^{\infty} \frac{1}{2^j} \left| \int_{ (x,y) \in [0,1]^2 } h_j(x,y) \, d\mu_M(x,y) - \int_{ (x,y) \in [0,1]^2 } h_j(x,y) \, d\mu_N(x,y) \right| \\
< \sum_{j=1}^{J} \frac{\varepsilon}{2^{j+1}} + \sum_{j=J+1}^{\infty} \frac{1}{2^j} < \frac{\varepsilon}{2} + \frac{\varepsilon}{2} = \varepsilon,
\end{multline*}
which completes the proof.
\end{proof}

\section{Main result}
\label{sec:result}

Now we are ready to prove our main result.

\begin{theorem}
\label{thm:main}
Let $\mu$ and $\mu_n$, $n \in \N$, be s-graphons. Then the following conditions are equivalent:
\begin{itemize}
\item[(1)] $\forall_{k \in \N} \colon \lim_{n\rightarrow\infty} C(\mu_n,k) = C(\mu,k)$ in the Vietoris topology on the hyperspace of compact subsets of $\Mk$,
\item[(2)] $\lim_{n\rightarrow\infty} C(\mu_n) = C(\mu)$ in the Vietoris topology on the hyperspace of compact subsets of $\sG$.
\end{itemize}
\end{theorem}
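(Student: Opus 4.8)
The plan is to introduce an intermediate condition and to prove the chain $(1) \Leftrightarrow (1') \Rightarrow (2) \Rightarrow (1)$, where $(1')$ reads: for every $k \in \N$, $\lim_{n\to\infty} \widetilde{C}(\mu_n,k) = \widetilde{C}(\mu,k)$ in the Vietoris topology on the hyperspace of compact subsets of $\sG$. For the equivalence $(1) \Leftrightarrow (1')$, recall that the map $M \mapsto \mu_M$ is a continuous injection from the compact set $\Mk^*$ into $\sG$; hence it is a homeomorphism onto its (compact, therefore closed) image, and it induces a homeomorphism between the respective hyperspaces. Since every $C(\nu,k)$ lies in $\Mk^*$ and $\widetilde{C}(\nu,k)$ is by definition its image, $(1) \Leftrightarrow (1')$ follows immediately. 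This is exactly the assertion that it is irrelevant whether $k$-shapes are regarded as compact subsets of $\Mk$ or of $\sG$.

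For $(1') \Rightarrow (2)$ I fix a compatible metric $\rho$ on $\sG$ and work with the induced Hausdorff distance $d_{H}^{\rho}$; as the hyperspace is compact metrizable, $d_{H}^{\rho}$-convergence coincides with Vietoris convergence. Given $\varepsilon > 0$, Lemma~\ref{lem:regularity} supplies a single $K \in \N$ with $d_{H}^{\rho}(C(\nu), \widetilde{C}(\nu,K)) \le \varepsilon$ for \emph{every} s-graphon $\nu$. By the triangle inequality,
\[
d_{H}^{\rho}(C(\mu_n), C(\mu)) \le d_{H}^{\rho}(C(\mu_n), \widetilde{C}(\mu_n,K)) + d_{H}^{\rho}(\widetilde{C}(\mu_n,K), \widetilde{C}(\mu,K)) + d_{H}^{\rho}(\widetilde{C}(\mu,K), C(\mu)),
\]
the outer two terms are each at most $\varepsilon$ by the regularity lemma (applied to $\nu = \mu_n$ and $\nu = \mu$), while the middle term tends to $0$ by $(1')$ with $k = K$. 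Thus $\limsup_n d_{H}^{\rho}(C(\mu_n), C(\mu)) \le 2\varepsilon$, and letting $\varepsilon \to 0$ gives $(2)$. The uniformity of $K$ over all $\nu$ in Lemma~\ref{lem:regularity} is precisely what makes this estimate uniform in $n$.

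The hard part is $(2) \Rightarrow (1)$, because recovering the sections $\widetilde{C}(\mu_n,k) = C(\mu_n) \cap \{ \mu_M \colon M \in \Mk^* \}$ from $C(\mu_n)$ (Lemma~\ref{lem:shapes-b}) means intersecting with a fixed closed set, which is not a Vietoris-continuous operation. I would therefore argue indirectly, by an external compactness argument rather than a direct one. Assume $(2)$ but suppose $(1)$ fails for some $k$; then along a subsequence $C(\mu_{n_j},k)$ stays at Hausdorff distance $\ge \delta > 0$ from $C(\mu,k)$. Invoking the compactness of s-convergence proved in~\cite{KuLoSz2019}, I pass to a further subsequence that is s-convergent to some s-graphon $\nu$, so $C(\mu_{n_j},m) \to C(\nu,m)$ for every $m$; continuity of the distance then gives $C(\nu,k) \ne C(\mu,k)$. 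But this subsequence satisfies $(1)$ with limit $\nu$, hence satisfies $(2)$ with limit $\nu$ by the already-proved implications $(1)\Rightarrow(1')\Rightarrow(2)$, so $C(\mu_{n_j}) \to C(\nu)$; since also $C(\mu_{n_j}) \to C(\mu)$ and the hyperspace is Hausdorff, we get $C(\nu) = C(\mu)$. By Corollary~\ref{cor:shapes} this forces $\nu$ and $\mu$ to be isomorphic, i.e. $C(\nu,m) = C(\mu,m)$ for all $m$, contradicting $C(\nu,k) \ne C(\mu,k)$. This contradiction establishes $(1)$ and completes the proof. The use of the external compactness result of~\cite{KuLoSz2019} in this last step is the only place where the argument is not self-contained, as flagged in the introduction.
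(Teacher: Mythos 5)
Your treatment of $(1) \Leftrightarrow (1')$ and $(1') \Rightarrow (2)$ is correct and essentially identical to the paper's: the same homeomorphism $M \mapsto \mu_M$ argument, and the same combination of Lemma~\ref{lem:regularity} (whose uniformity in $\nu$ you rightly single out as the key point) with the triangle inequality.

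For $(2) \Rightarrow (1)$ you take a genuinely different, though closely related, route. The paper introduces the quotient space $\mathcal{X}_s$ of isomorphism classes and two topologies on it: (A), the initial topology of the maps $[\nu] \mapsto C(\nu,k)$, $k \in \N$, and (B), the initial topology of $[\nu] \mapsto C(\nu)$. The already-proved implication $(1) \Rightarrow (2)$ says the identity is continuous from (A) to (B); the paper then proves (A) is compact and concludes that the two topologies coincide (a continuous bijection from a compact space onto a Hausdorff space is a homeomorphism, Hausdorffness of (B) being exactly the injectivity statement of Corollary~\ref{cor:shapes}). Your subsequence-and-contradiction argument is an unfolding of this abstract statement into sequences: your extraction of an s-convergent subsequence is the compactness of (A), uniqueness of Vietoris limits gives $C(\nu) = C(\mu)$, and Corollary~\ref{cor:shapes} converts that back into equality of all $k$-shapes. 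Same skeleton, different packaging; your version avoids introducing $\mathcal{X}_s$ and the abstract topological lemma, while the paper's version makes the statement ``the two topologies coincide'' explicit and reusable.

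One step where you are too quick: the compactness you invoke --- every sequence of \emph{s-graphons} has a subsequence whose $k$-shapes converge, for every $k$, to the $k$-shapes of a single s-graphon --- is not literally what \cite{KuLoSz2019} proves; their compactness theorem is stated for \emph{graph} sequences. The paper spends the final paragraph of its proof on precisely this transfer: each $\mu_{n}$ is approximated, via \cite[Theorem~4.7]{KuLoSz2019}, by a graph whose tuple of $k$-shapes is $\frac{1}{n}$-close to that of $\mu_n$ in a fixed compatible metric on $\prod_{k\in\N}\mathcal{K}(\Mk^*)$; one then applies graph-sequence compactness together with \cite[Theorem~4.5]{KuLoSz2019} to the resulting diagonal graph sequence, and passes the limit back to the s-graphons $\mu_n$. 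Your argument is correct once this approximation and diagonalization step is inserted (or cited in a form that actually covers sequences of measures), but as written the citation does not support the statement you use.
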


\begin{proof}
The mapping $M \mapsto \mu_M$ from $\Mk^*$ to $\sG$ is one-to-one and continuous. As the space $\Mk^*$ is compact, it follows that the mapping $M \mapsto \mu_M$ is even a homeomorphism of $\Mk^*$ onto its image. This implies that the mapping $F \mapsto \left\{ \mu_M \colon M \in F \right\}$ is a homeomorphism of the hyperspace of compact subsets of $\Mk^*$ onto its image (which is a subset of the hyperspace of compact subsets of $\sG$). As a consequence, condition~(1) is equivalent to the following condition:
\begin{itemize}
\item[(1')] $\forall_{k \in \N} \colon \lim_{n\rightarrow\infty} \widetilde{C}(\mu_n,k) = \widetilde{C}(\mu,k)$ in the Vietoris topology on the hyperspace of compact subsets of $\sG$.
\end{itemize}

Now suppose that condition~(1') holds. We fix some metric $\rho$ on $\sG$ which is compatible with the weak topology, and we denote by $d_{H}^{\rho}$ the Hausdorff distance on the hyperspace of compact subsets of $\sG$ which is obtained from the metric $\rho$. Fix $\varepsilon > 0$. By Lemma~\ref{lem:regularity}, there is $K \in \N$ such that for every s-graphon $\nu$ we have~\eqref{eq:net}. Let $N \in \N$ be such that for every $n \ge N$ it holds $d_{H}^{\rho} ( \widetilde{C}(\mu_n,K), \widetilde{C}(\mu,K) ) < \varepsilon$. Then for every $n \ge N$ we have
\begin{equation*}
\begin{split}
& d_{H}^{\rho} ( C(\mu_n), C(\mu) ) \\
\le & d_{H}^{\rho} ( C(\mu_n), \widetilde{C}(\mu_n,K) ) + d_{H}^{\rho} ( \widetilde{C}(\mu_n,K), \widetilde{C}(\mu,K) ) + d_{H}^{\rho} ( \widetilde{C}(\mu,K), C(\mu) ) < 3\varepsilon,
\end{split}
\end{equation*}
and so condition~(2) holds. So we have proved (1) $\Leftrightarrow$ (1') $\Rightarrow$ (2).

For $\nu \in \sG$ we denote by $[\nu]$ the isomorphism class of $\nu$ (which is an element of the space $\mathcal{X}_s$ defined before Corollary~\ref{cor:shapes}). Recall that by Corollary~\ref{cor:shapes}, two s-graphons $\nu_1, \nu_2$ are isomorphic if and only if $C(\nu_1) = C(\nu_2)$. Therefore the following two topologies on the space $\mathcal{X}_s$ are well defined:
\begin{itemize}
\item[(A)] the coarsest topology such that the mapping $[\nu] \mapsto C(\nu,k)$ from $\mathcal{X}_s$ to the hyperspace of compact subsets of $\Mk^*$ is continuous for every $k \in \N$,
\item[(B)] the coarsest topology such that the mapping $[\nu] \mapsto C(\nu)$ from $\mathcal{X}_s$ to the hyperspace of compact subsets of $\sG$ is continuous.
\end{itemize}

The already proved implication (1) $\Rightarrow$ (2) shows that the identity mapping on $\mathcal{X}_s$ is continuous from the topology given by (A) to the topology given by (B). It only remains to observe that the topology given by (A) is compact, as then the two topologies coincide and the equivalence of conditions~(1) and~(2) follows.

The compactness was already essentially proved in~\cite{KuLoSz2019} but let us explain it in detail. Let $\prod_{k\in\N}\mathcal{K}(\Mk^*)$ be the product space of the hyperspaces of compact subsets of $\Mk^*$. Note that this space is compact metrizable, so we can fix a compatible metric $\sigma$ on $\prod_{k\in\N}\mathcal{K}(\Mk^*)$. Now let $\{\nu_n\}_{n=1}^\infty$ be an arbitrary sequence of s-graphons. By~\cite[Theorem~4.7]{KuLoSz2019} we can find, for every $n\in\N$, a graph sequence $\{ G_n^i \}_{i=1}^\infty$ which is s-convergent and its limit is $\mu_n$. Let $\mathcal S_n^i$ denote the element of $\prod_{k\in\N}\mathcal{K}(\Mk^*)$ which has the $k$-shape of $G_n^i$ on its $k$th coordinate. Similarly, let $\mathcal M_n$ denote the element of $\prod_{k\in\N}\mathcal{K}(\Mk^*)$ which has the $k$-shape $C(\nu_n,k)$ on its $k$th coordinate. Then, for every $n\in\N$, we can easily find $i_n\in\N$ such that
\begin{equation}
\label{eq:skoro}
\sigma(\mathcal S_n^{i_n},\mathcal M_n)<\frac 1n.
\end{equation}
By passing to a subsequence, we may assume that the graph sequence $\{ G_n^{i_n} \}_{n=1}^\infty$ is s-convergent. By~\cite[Theorem~4.5]{KuLoSz2019} there is an s-graphon $\nu$ such that for every $k\in\N$, the $k$-shapes of $G_n^{i_n}$ converge to $C(\nu,k)$ (when $n$ goes to infinity). Finally, by~(\ref{eq:skoro}) it follows that the $k$-shapes $C(\nu_n,k)$ converge to $C(\nu,k)$ as well. This proves the compactness of the topology given by (A).
\end{proof}

\section{Convergence of graph sequences}
\label{sec:graphs}

Recall that to each matrix $M \in \Mk^*$ we associated an s-graphon $\mu_M$. Now, if $G$ is an arbitrary finite graph (with a non-empty edge set) then we can consider its adjacency matrix $A_G$. After normalizing $A_G$ by its $l^1$-norm we obtain the matrix $\widetilde{A}_G = \frac{1}{\| A_G \|_{l^1}} A_G$ (which belongs to $\Mk^*$ where $k$ is the number of vertices of $G$) and the corresponding s-graphon $\mu_{\widetilde{A}_G}$. Therefore we can define the shape of every finite graph $G$ (with a non-empty edge set) as the shape $C(\mu_{\widetilde{A}_G})$. Following~\cite{KuLoSz2019}, we say that a graph sequence $\{ G_n \}_{n=1}^{\infty}$ is s-convergent (to an s-graphon $\mu$) if and only if the $k$-shapes of $\mu_{\widetilde{A}_{G_n}}$ are convergent (to the $k$-shape of $\mu$) for every $k \in \N$. Thus we immediately obtain the following corollary of Theorem~\ref{thm:main} (we tacitly assume that each of the graphs $G_n$, $n \in \N$, has a non-empty edge set).

\begin{corollary}
\label{cor:graphs}
Let $\{ G_n \}_{n=1}^{\infty}$ be a graph sequence and let $\mu$ be an s-graphon. Then the following conditions are equivalent:
\begin{itemize}
\item[(1)] $\{ G_n \}_{n=1}^{\infty}$ is s-convergent to $\mu$,
\item[(2)] $\lim_{n\rightarrow\infty} C(G_n) = C(\mu)$ in the Vietoris topology on the hyperspace of compact subsets of $\sG$.
\end{itemize}
\end{corollary}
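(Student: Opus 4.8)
The plan is to deduce the corollary directly from Theorem~\ref{thm:main} by applying it to the particular sequence of s-graphons $\mu_n := \mu_{\widetilde{A}_{G_n}}$, $n \in \N$. First I would check that each $\mu_n$ is a legitimate s-graphon: the normalized adjacency matrix $\widetilde{A}_{G_n}$ is symmetric, has non-negative entries, and (by the normalization through the $l^1$-norm) satisfies $\sum_{i,j} \widetilde{A}_{G_n}(i,j) = 1$, so $\widetilde{A}_{G_n} \in \Mk^*$ with $k$ the number of vertices of $G_n$; and it was already recorded in Section~\ref{sec:preliminaries} that $\mu_M$ is an s-graphon whenever $M \in \Mk^*$. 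Note that the number of vertices of $G_n$ may vary with $n$, but this causes no difficulty: each $\mu_n$ is a measure on $[0,1]^2$, and the $k$-shapes $C(\mu_n,k)$ are defined for every fixed $k$ regardless of the size of $G_n$ (so the parameter $k$ of the $k$-shape must be kept distinct from the matrix size $|V(G_n)|$).

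With this identification in hand, the two conditions of the corollary become precisely the two conditions of Theorem~\ref{thm:main} rewritten for the sequence $\{\mu_n\}_{n=1}^{\infty}$. Indeed, by the definition of s-convergence recalled just before the corollary, condition~(1) asserts exactly that for every $k \in \N$ the $k$-shapes $C(\mu_n,k)$ converge to $C(\mu,k)$, which is condition~(1) of Theorem~\ref{thm:main}. On the other side, the shape of the graph $G_n$ was defined as $C(G_n) = C(\mu_{\widetilde{A}_{G_n}}) = C(\mu_n)$, so condition~(2) of the corollary is verbatim condition~(2) of Theorem~\ref{thm:main}. Invoking the equivalence (1)$\Leftrightarrow$(2) of Theorem~\ref{thm:main} for $\{\mu_n\}_{n=1}^{\infty}$ then finishes the argument.

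Since the theorem does the real work, the only point that requires any care—and which I regard as the sole, and minor, obstacle—is to confirm that the combinatorial notion of s-convergence of graphs from~\cite{KuLoSz2019} really is the same notion as convergence of the $k$-shapes of the associated measures used in Theorem~\ref{thm:main}, rather than a mere formal resemblance. This is exactly the interpretation of finite graphs as measures discussed in the introduction (see also~\cite[Lemma~5.1]{KuLoSz2019}): the $k$-shape of $G_n$ in the sense of fractional vertex partitions coincides with the $k$-shape $C(\mu_n,k)$ of $\mu_n$, and convergence in the Hausdorff distance agrees with convergence in the Vietoris topology because the relevant hyperspaces are compact metrizable. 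Once this bookkeeping is acknowledged, no further computation is needed and the corollary is immediate.
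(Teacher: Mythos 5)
Your proposal is correct and matches the paper's own treatment: the paper presents the corollary as an immediate consequence of Theorem~\ref{thm:main} applied to $\mu_n = \mu_{\widetilde{A}_{G_n}}$, since s-convergence of $\{G_n\}_{n=1}^{\infty}$ and the shape $C(G_n)$ are \emph{defined} in Section~\ref{sec:graphs} precisely through these associated s-graphons, so conditions (1) and (2) of the corollary are verbatim conditions (1) and (2) of the theorem. The compatibility with the original combinatorial definition in~\cite{KuLoSz2019}, which you flag as the one point needing care, is exactly what the paper delegates to the identification of graphs with measures (cf.~\cite[Lemma~5.1]{KuLoSz2019}), so your bookkeeping is the same as the paper's.
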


\section{Continuity versus measurability}
\label{sec:BorCont}

Recall that it makes no difference whether we require the functions $f_1, f_2, \ldots, f_k$ from the definition of $k$-shapes to be Borel or continuous. This is because the closure of the set of the corresponding matrices in $\Mk$ is the same in both cases, see~\cite[Lemma~3.1]{KuLoSz2019}. Therefore one would expect that in Definition~\ref{def:shape} we can similarly replace continuous fairly distributed functions by arbitrary fairly distributed functions (such functions are automatically non-negative and bounded Borel by definition). In this section we show that this expectation is correct. It may seem that a simple approximation of a Borel function by a continuous one (in an appropriate $L^1$-norm) is enough to prove this fact. But by doing so, we may lose the fair distribution of the function. Therefore we apply Lemma~\ref{lem:shapes-a} to obtain this result.

\begin{proposition}
\label{prop:BC}
For every $\mu \in \sG$ it holds
\[
C(\mu) = \overline{\{ \Phi(f,\mu) \colon f \in \FD \}},
\]
where the closure is taken in the weak topology.
\end{proposition}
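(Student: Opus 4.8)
Since $\FDC \subseteq \FD$, Definition~\ref{def:shape} gives the inclusion $C(\mu) \subseteq \overline{\{\Phi(f,\mu) \colon f \in \FD\}}$ for free, so the entire content of the proposition is the reverse inclusion; equivalently, I would show that $\Phi(f,\mu) \in C(\mu)$ for every single $f \in \FD$. The plan is to approximate $\Phi(f,\mu)$ weakly by measures lying in the sets $\widetilde{C}(\mu,k)$, which are contained in $C(\mu)$ by Lemma~\ref{lem:shapes-a}; since $C(\mu)$ is closed, producing such an approximating sequence suffices.

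Fix $f \in \FD$ and put $C = \sup f$. The crucial point is to discretize $f$ only in its second variable and never in the first. Concretely, for $k = 2^n$ and the intervals $I_i$ from~\eqref{not:k-net}, I would take the functions $f_i(x) = \int_{u \in I_i} f(x,u)\,d\lambda(u)$ exactly as in the proof of Lemma~\ref{lem:shapes-b}; the fair distribution of $f$ makes these non-negative Borel functions sum to the constant $1$ with $\int f_i\,d\lambda = 1/k$, so the associated matrix $M_n(i,j) = \int_{[0,1]^2} f_i(x) f_j(y)\,d\mu(x,y)$ lies in $C(\mu,k)$ and $\mu_{M_n} \in \widetilde{C}(\mu,k)$. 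Writing $g_n(x,u) = k\int_{v \in I_i} f(x,v)\,d\lambda(v)$ for $u \in I_i$ (that is, $f$ averaged over its second variable alone), a direct check using the fair distribution of $f$ shows $g_n \in \FD$ and $\|g_n\|_\infty \le C$, while computing $\varphi(g_n,\mu)$ shows it equals $k^2 M_n(i,j)$ on each block $I_i \times I_j$, so that $\Phi(g_n,\mu) = \mu_{M_n}$.

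It then remains to show $\Phi(g_n,\mu) \to \Phi(f,\mu)$ weakly, for which I would prove the stronger statement that the Radon--Nikodym derivatives converge in $L^1([0,1]^2,\lambda^2)$. Repeating the computation from the first part of the proof of Lemma~\ref{lem:shapes-a}, and using that both $f$ and $g_n$ are fairly distributed, yields
\[
\| \varphi(f,\mu) - \varphi(g_n,\mu) \|_{L^1([0,1]^2,\lambda^2)} \le \| f - g_n \|_{L^1([0,1]^2, \mu^{(1)} \times \lambda)} + \| f - g_n \|_{L^1([0,1]^2, \mu^{(2)} \times \lambda)},
\]
and by the symmetry $\mu^{(1)} = \mu^{(2)}$ both summands equal $\int_{[0,1]} \| f(x,\cdot) - g_n(x,\cdot) \|_{L^1([0,1],\lambda)} \, d\mu^{(1)}(x)$. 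For each fixed $x$, the function $g_n(x,\cdot)$ is precisely the conditional expectation of $f(x,\cdot) \in L^1([0,1],\lambda)$ with respect to the dyadic filtration, so the martingale convergence theorem gives $\| f(x,\cdot) - g_n(x,\cdot) \|_{L^1([0,1],\lambda)} \to 0$. As these fiberwise norms are bounded by $2C$, dominated convergence over $x \sim \mu^{(1)}$ forces the whole integral to $0$. Hence $\mu_{M_n} = \Phi(g_n,\mu) \to \Phi(f,\mu)$ weakly, and $\Phi(f,\mu) \in \overline{\bigcup_{k\in\N} \widetilde{C}(\mu,k)} = C(\mu)$, as desired.

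The step I expect to be most delicate is the choice of discretization, and it is exactly where the naive approach breaks down: if one averaged $f$ over the two-dimensional blocks $I_j \times I_i$ (as in the second part of Lemma~\ref{lem:shapes-a}), then the relevant fiber norm would involve block-averages of $f$ in its first variable integrated against $\mu^{(1)}$, and since $\mu^{(1)}$ may be singular with respect to $\lambda$ these averages need not recover $f$. Averaging in the second variable only both preserves fair distribution for free and reduces the convergence to a clean fiberwise $L^1$ statement, which is the technical heart of the argument.
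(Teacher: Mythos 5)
Your proof is correct, and its skeleton is the same as the paper's: reduce to showing $\Phi(f,\mu)\in C(\mu)$ for each fixed $f\in\FD$, discretize $f$ only in its second variable via $f_i(x)=\int_{u\in I_i}f(x,u)\,d\lambda(u)$, note that the resulting matrix $M$ lies in $C(\mu,k)$ so that $\mu_M\in\widetilde{C}(\mu,k)$, and conclude with Lemma~\ref{lem:shapes-a}. The genuine difference is how each argument shows that $\mu_M$ approximates $\Phi(f,\mu)$. The paper argues metrically: by the computation~\eqref{eq:close2}, $\mu_M$ and $\Phi(f,\mu)$ assign equal mass to every block $I_i\times I_j$, and since the test functions $h_1,\dots,h_J$ defining the metric~\eqref{eq:metric} oscillate by less than $\varepsilon/2$ on blocks of side $1/k$, one gets $\rho(\mu_M,\Phi(f,\mu))<\varepsilon$; this is a quantitative, per-$\varepsilon$ argument in which $k$ depends on $\varepsilon$. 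You instead identify $\mu_{M_n}$ exactly as $\Phi(g_n,\mu)$, where $g_n(x,\cdot)$ is the conditional expectation of $f(x,\cdot)$ with respect to the dyadic filtration, bound $\|\varphi(f,\mu)-\varphi(g_n,\mu)\|_{L^1(\lambda^2)}$ by the two fiber norms (using fair distribution to integrate out a variable, a slightly sharper move than the sup bound used in the analogous estimate inside Lemma~\ref{lem:shapes-a}), and finish with martingale convergence plus dominated convergence. Your route buys a stronger conclusion --- convergence of the Radon--Nikodym derivatives in $L^1(\lambda^2)$, hence total variation convergence, with no need to fix any metric on $\sG$ --- at the cost of invoking the martingale convergence theorem and restricting to dyadic $k=2^n$, which is harmless since Lemma~\ref{lem:shapes-a} takes a union over all $k$. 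Your closing observation, that averaging $f$ in its first variable would fail because the marginal $\mu^{(1)}$ may be singular with respect to $\lambda$, pinpoints exactly the obstruction that the paper's choice of discretization (identical to yours) is designed to avoid.
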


\begin{proof}
By Definition~\ref{def:shape} we have $C(\mu) = \overline{\{ \Phi(f,\mu) \colon f \in \FDC \}}$, and so the inclusion $C(\mu) \subseteq \overline{\{ \Phi(f,\mu) \colon f \in \FD \}}$ is trivial.

To prove the opposite inclusion, we only need to show that $\Phi(f,\mu) \in C(\mu)$ for every $f \in \FD$ as the set $C(\mu)$ is closed.
Let $\rho$ be the compatible metric on $\sG$ introduced in the proof of Lemma~\ref{lem:regularity} by~\eqref{eq:metric}.
Let us fix a function $f \in \FD$, and let us also fix $\varepsilon > 0$.
We find $J \in \N$ such that
\[
\sum_{j=J+1}^{\infty} \frac{1}{2^j} < \frac{\varepsilon}{2}.
\]
Then we find $k \in \N$ such that $\left| h_j(x,y) - h_j(x',y') \right| < \frac{\varepsilon}{2}$, $j=1,2,\ldots,J$, whenever the points $(x,y), (x',y')$ from $[0,1]^2$ are such that $|x-x'| \le \frac{1}{k}$ and $|y-y'| \le \frac{1}{k}$. We define $C = \sup_{(x,y)\in[0,1]^2}f(x,y)$. Let $I_i$, $i=1,2,\ldots,k$, be the intervals introduced in the proof of Lemma~\ref{lem:shapes-a} by~\eqref{not:k-net}. Let us define functions $f_1,f_2,\ldots,f_k \colon [0,1] \rightarrow \R$ by
\[
f_i(x) = \int_{u \in I_i} f(x,u) \, d\lambda(u), \qquad x \in [0,1], \qquad i=1,2,\ldots,k.
\]
Then $f_1,f_2,\ldots,f_k$ are non-negative Borel functions such that their sum is the constant function $1$ and such that $\int_{x\in[0,1]} f_i(x) \, d\lambda(x) = 1/k$ for every $i=1,2,\ldots,k$. Repeating step by step the computations from equation~\eqref{eq:close2} in Lemma~\ref{lem:shapes-b} we obtain that the corresponding matrix $M$ belonging to the $k$-shape $C(\mu,k)$, and the corresponding measure $\mu_M \in \widetilde{C}(\mu,k)$, satisfy
\[
\mu_M (I_i \times I_j) = M(i,j) = \Phi(f,\mu) (I_i \times I_j), \qquad i,j = 1,2,\ldots,k.
\]
It easily follows by the choice of $k$ that, for every $j = 1,2,\ldots,J$, it holds
\[
\left| \int_{ (x,y) \in [0,1]^2 } h_j(x,y) \, d\mu_M(x,y) - \int_{ (x,y) \in [0,1]^2 } h_j(x,y) \, d\Phi(f,\mu)(x,y) \right| < \frac{\varepsilon}{2}.
\]
Therefore
\begin{equation*}
\begin{split}
& \rho(\mu_M,\Phi(f,\mu)) \\
=& \sum_{j=1}^{\infty} \frac{1}{2^j} \left| \int_{ (x,y) \in [0,1]^2 } h_j(x,y) \, d\mu_M(x,y) - \int_{ (x,y) \in [0,1]^2 } h_j(x,y) \, d\Phi(f,\mu)(x,y) \right| \\
<& \sum_{j=1}^{J} \frac{\varepsilon}{2^{j+1}} + \sum_{j=J+1}^{\infty} \frac{1}{2^j} < \frac{\varepsilon}{2} + \frac{\varepsilon}{2} = \varepsilon.
\end{split}
\end{equation*}
As $\varepsilon > 0$ was chosen arbitrarily, it follows that $\Phi(f,\mu) \in \overline{ \bigcup_{k\in\N} \widetilde{C}(\mu,k) }$. Finally, this means that $\Phi(f,\mu) \in C(\mu)$ by Lemma~\ref{lem:shapes-a}.
\end{proof}

\section{Acknowledgments}

The author would like to express many thanks to Jan Greb\'ik, Jan Hladk\'y, Aranka Hru\v{s}kov\'a and Israel Rocha for fruitful discussions on s-convergence and other related topics. The author would also like to thank the anonymous referees for their valuable comments.

\bibliographystyle{plain}
\bibliography{bibliography}

\end{document}